\documentclass[11pt,a4paper]{article}

\usepackage[T1]{fontenc}
\usepackage[utf8]{inputenc}
\usepackage{amsmath,amssymb,amsthm}
\usepackage{booktabs}
\usepackage{microtype}
\usepackage[margin=1.1in]{geometry}
\usepackage[hidelinks]{hyperref}

\theoremstyle{plain}
\newtheorem{lemma}{Lemma}
\newtheorem{theorem}[lemma]{Theorem}
\newtheorem{proposition}[lemma]{Proposition}
\newtheorem{corollary}[lemma]{Corollary}
\theoremstyle{definition}
\newtheorem{remark}[lemma]{Remark}

\newcommand{\Cov}[3]{C(#1,#2,#3)}
\newcommand{\Covl}[3]{C_{\lambda}(#1,#2,#3)}
\newcommand{\Sch}[3]{L(#1,#2,#3)}
\newcommand{\blocks}{\mathcal{B}}
\newcommand{\pts}{X}
\newcommand{\gp}{G}
\newcommand{\link}{L}
\DeclareMathOperator{\ex}{ex}

\title{The covering number $\Cov{12}{6}{4}$ is $41$}
\author{Charlie Krug\\
  \small Independent researcher\\
  \small \href{mailto:charlie@charliekrug.com}{\texttt{charlie@charliekrug.com}}\\
  \small ORCID \href{https://orcid.org/0009-0001-6144-9421}{0009-0001-6144-9421}}
\date{July 25, 2026}

\begin{document}
\maketitle

\begin{abstract}
A $t$-$(v,k,\lambda)$ covering is a collection of $k$-subsets (blocks) of a $v$-set such that every
$t$-subset of points lies in at least $\lambda$ blocks; the covering number $\Covl{v}{k}{t}$ is the
least number of blocks in such a collection, and one writes $\Cov{v}{k}{t}$ when $\lambda=1$. The
recorded bounds for $\Cov{12}{6}{4}$ have been $40 \le \Cov{12}{6}{4} \le 41$. We show that no
$4$-$(12,6,1)$ covering with $40$ blocks exists, and hence that $\Cov{12}{6}{4}=41$. A counting
argument shows that in a hypothetical $40$-block covering every point lies in exactly $20$ blocks,
the link of every point is an optimal $3$-$(11,5,1)$ covering with a forced degree sequence, and
the six pairs of points of degree $10$ form a perfect matching;
an exhaustive case analysis over the orbits of a group of order $3840$, carried out by
satisfiability solving, then shows that no optimal $3$-$(11,5,1)$ covering occurs as such a link.
Each of the $81$ formulas in the primary proof has an unsatisfiability certificate checked by
\texttt{drat-trim} and by the formally verified checker \texttt{cake\_lpr}; two additional
cross-encoding certificates are checked by the same pipeline. The lower-bound argument uses no tabulated covering
number: its only numerical input, $\Cov{10}{4}{2} \ge 9$, is itself certified. As a by-product the
certificates yield a self-contained certified proof that the optimal $3$-$(11,5,1)$ covering is
unique up to isomorphism. Equivalently, the Tur\'an number $T(12,8,6)$ is $41$; the new value propagates to
improved lower bounds for $\Cov{13}{7}{5}$, $\Cov{14}{8}{6}$, $\Cov{15}{9}{7}$
and $\Cov{16}{10}{8}$.

\medskip
\noindent\textbf{Keywords:} covering design, covering number, satisfiability certificate,
computer-assisted proof, Tur\'an number.

\smallskip
\noindent\textbf{MSC 2020:} 05B40 (primary); 05D05, 68R05, 68T20 (secondary).
\end{abstract}

\section{Introduction}

Let $v \ge k \ge t \ge 1$ and let $\pts$ be a set of $v$ points. A \emph{$t$-$(v,k,\lambda)$
covering} is a pair $(\pts,\blocks)$, where $\blocks$ is a collection of $k$-subsets of $\pts$
called \emph{blocks}, such that every $t$-subset of $\pts$ is contained in at least $\lambda$ blocks
of $\blocks$. The \emph{covering number} $\Covl{v}{k}{t}$ is the minimum number of blocks in such a
covering, and if $\lambda=1$ we write $\Cov{v}{k}{t}$. For a subset $Y \subseteq \pts$ we write
$b(Y)$ for the number of blocks of $\blocks$ containing $Y$, abbreviating $b(\{p\})$ to $b(p)$. We
follow the notation of Gordon and Stinson~\cite{HandbookCoverings} throughout.

The systematic study of covering numbers goes back to Erd\H{o}s and R\'enyi~\cite{ErdosRenyi1956}
and Erd\H{o}s and Hanani~\cite{ErdosHanani1963}; the asymptotic behaviour for fixed $k$ and $t$ was
settled by R\"odl~\cite{Rodl1985}. Exact values, by contrast, remain scarce. Most recorded upper
bounds come either from explicit constructions, such as those of Gordon, Kuperberg and
Patashnik~\cite{GKP1995}, or from stochastic search, as pioneered for covering designs by Nurmela
and \"Osterg\aa rd~\cite{NurmelaOstergard1993}; the standard reference for what is known in small
cases is the La Jolla Covering Repository~\cite{LJCR}.\footnote{The repository is in the process of
being superseded by the Covering Repository~\cite{CoveringRepository}, which imports its historical
data; the bounds quoted in this paper agree on both, and a snapshot of the repository data as of
2026-07-24 is pinned in the archived deposit.}

The basic lower bound is due to Sch\"onheim~\cite{Schonheim1964}: since the blocks through a fixed
point $p$, with $p$ deleted, form a $(t-1)$-$(v-1,k-1,\lambda)$ covering,
\begin{equation}\label{eq:schonheim}
  \Covl{v}{k}{t} \;\ge\; \left\lceil \frac{v}{k}\,\Covl{v-1}{k-1}{t-1} \right\rceil .
\end{equation}
Iterating \eqref{eq:schonheim} down to $t=1$ gives the Sch\"onheim bound $\Sch{v}{k}{t}$; in
practice one applies \eqref{eq:schonheim} with the best known value of $\Covl{v-1}{k-1}{t-1}$
substituted on the right, which is usually stronger. Improvements to \eqref{eq:schonheim} are
known: Hanani's refinement for $t=2$ (see \cite[Thm.~1.20]{HandbookCoverings}), the increment of
Mills and Mullin~\cite{MillsMullin} under a divisibility hypothesis, and the higher-incidence-matrix
bounds of Horsley~\cite{Horsley2014} and Horsley and Singh~\cite{HorsleySingh2017}, which improved
many hundreds of tabulated entries.

When this work began, the bounds recorded in the repository were
\begin{equation}\label{eq:known}
  40 \;\le\; \Cov{12}{6}{4} \;\le\; 41 .
\end{equation}
The lower bound is \eqref{eq:schonheim} applied with $\Cov{11}{5}{3}=20$, giving
$\lceil (12/6)\cdot 20 \rceil = 40$; the fully iterated Sch\"onheim bound is only
$\Sch{12}{6}{4}=36$. The upper bound is an explicit $41$-block covering recorded in the
repository~\cite{LJCR}. The value $\Cov{12}{6}{4}\le 41$ was already tabulated by Gordon,
Kuperberg and Patashnik~\cite[Table~4]{GKP1995}, whose table code attributes the construction to
the simulated-annealing searches of Nurmela and \"Osterg\aa rd~\cite{NurmelaOstergard1993}. The
repository's improvement history dates its own entry to the initial 1996 import of its database
and labels it only ``JCD article''; we verify the archived $41$-block design directly in
Section~\ref{sec:verification}.

None of the general improvements cited above closes the gap in \eqref{eq:known}; we verify this in
Remark~\ref{rem:novelty}. Our main result is that the upper bound in \eqref{eq:known} is exact.

\begin{theorem}\label{thm:main}
$\Cov{12}{6}{4} = 41$. That is, there is no $4$-$(12,6,1)$ covering with $40$ blocks.
\end{theorem}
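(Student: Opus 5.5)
The upper bound $\Cov{12}{6}{4}\le 41$ is the archived design, which we check directly, so the task is to rule out a $40$-block covering $(\pts,\blocks)$. The plan is to first force the structure of such a covering by counting, and then eliminate the remaining structured cases by computer.

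\textbf{Counting phase.} For each point $p$, the blocks through $p$ with $p$ deleted form a $3$-$(11,5,1)$ covering. Since $\Cov{11}{5}{3}=20$, this gives $b(p)\ge 20$. Summing over points, $\sum_p b(p)=6\cdot 40=240=12\cdot 20$, so every point has $b(p)=20$ exactly. Hence every link is an optimal $3$-$(11,5,1)$ covering. I would not rely on the tabulated value $\Cov{11}{5}{3}=20$; instead I would rederive it from a certified $\Cov{10}{4}{2}\ge 9$, via the same Sch\"onheim step together with a degree count on the links of the $11$-point design.

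Next I would determine the degree sequence of a link. Let $L_p$ be the link of $p$, with $20$ blocks of size $5$, so $\sum_{q\ne p} b(\{p,q\})=100$ over $11$ points. The link of $q$ inside $L_p$ is a $2$-$(10,4,1)$ covering, so each $b(\{p,q\})\ge 9$. The total excess over $99$ is therefore exactly $1$: one point has pair-degree $10$ and ten points have pair-degree $9$. Each point $p$ thus has a unique partner $q$ with $b(\{p,q\})=10$. Since $b(\{p,q\})$ is symmetric, this partner relation is an involution without fixed points, so the six degree-$10$ pairs form a perfect matching of $\pts$.

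I would push the counting further where cheap. For instance, a pair of degree $9$ means the derived $2$-$(10,4,1)$ covering is optimal, which constrains its triple-degrees. The aim is to pin down the admissible isomorphism types of links as far as possible.

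\textbf{Computational phase.} Fix the matching $\{1,2\},\dots,\{11,12\}$. Its stabiliser in $S_{12}$ is the wreath product $S_2\wr S_6$ of order $2^6\cdot 720=46080$. The $3840$ in the abstract, which is $2^5\cdot 5!$, corresponds to also fixing point $1$ and hence its partner. Within this stabiliser I would enumerate, up to symmetry, the possible links of point $1$. These are optimal $3$-$(11,5,1)$ coverings with the forced degree sequence, with the distinguished degree-$10$ point equal to $2$ and the other ten points paired by the matching. Equivalently, I would enumerate partial assignments of the $20$ blocks through $1$ modulo the group.

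For each orbit representative I would write a SAT encoding: $\binom{12}{6}$ block variables, a cardinality constraint of $40$ blocks, a coverage clause for every $4$-set, and the counting consequences as extra constraints (all $b(p)=20$, pair degrees $9$ or $10$ according to the matching). Each formula is solved, and the DRAT/LRAT unsatisfiability proofs are checked with \texttt{drat-trim} and \texttt{cake\_lpr}. The main obstacle will be making the orbit split fine enough that each instance is tractable while keeping the orbit enumeration itself provably exhaustive. I would first split on the full link of point $1$ (equivalently on the isomorphism class of optimal $3$-$(11,5,1)$ coverings, which should be unique) and then on orbits of the link of a second point. I would verify exhaustiveness by an independent canonical-form count, cross-checked by a second encoding, so that unsatisfiability of all $81$ formulas yields the theorem.
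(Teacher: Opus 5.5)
Your counting phase is the paper's reduction (Section~\ref{sec:structure}): $b(p)=20$, pair degrees $9$ or $10$, the forced perfect matching, and the residual group of order $3840$. You diverge from the paper in how the candidate links of point $1$ are classified.

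\textbf{Classification of links.} You propose to enumerate their $\gp$-orbits directly and to rely on an independent orbit count for exhaustiveness. The paper never enumerates. It runs a SAT case analysis on link instances over the $462$ five-subsets, using root-index orbit branching (Lemma~\ref{lem:branch}). These instances are augmented by blocker clauses, each forbidding a single link already certified not to extend. Unsatisfiability of this layer therefore \emph{certifies} that every candidate link lies in the $20$ blocked $\gp$-orbits. The classification becomes part of the DRAT/\texttt{cake\_lpr}-checked proof, and uniqueness of the optimal $3$-$(11,5,1)$ covering comes out as a by-product rather than an input. Your route is viable and conceptually simpler. Its exhaustiveness, however, rests on trusted enumeration code (or on importing van Rees's classification), which is exactly the trust the blocker mechanism removes.

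\textbf{Downstream agreement.} Once a link of point $1$ is fixed, your $924$-variable formula collapses to the paper's extension instance $E(\link)$: the other blocks through $1$ are forced false, and every $4$-set through $1$ is already covered. The paper closes all $20$ such instances directly, so your further split on the link of a second point is not needed. The figure of $81$ formulas is specific to the paper's decomposition ($47+14+20$) and would not arise from yours.

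\textbf{A claim that is wrong as stated.} Splitting on the full link of point $1$ is \emph{not} equivalent to splitting on its isomorphism class. The optimal covering is indeed unique up to isomorphism. But once the matching is fixed, the only remaining symmetry is $\gp$, and the $15{,}120$ labelled optimal coverings with the prescribed degree-$10$ point fall into $20$ $\gp$-orbits. Equivalently, $\operatorname{Aut}(E)$ (order $240$, for the covering $E$ of Remark~\ref{rem:repair}) has $20$ orbits on the $945$ perfect matchings of the ten remaining points. Normalising both the matching and the link to single representatives would silently discard $19$ of the $20$ cases. If you import uniqueness, the correct form of your plan is to fix the link as $E$ and branch over those $20$ matching orbits. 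Up to relabelling, this reproduces the paper's $20$ extension instances.
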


The proof is computer-assisted, with a deliberately small mathematical core.
Section~\ref{sec:structure} shows by counting that a hypothetical $40$-block covering is completely
rigid: every point lies in exactly $20$ blocks, the link of every point is an \emph{optimal}
$3$-$(11,5,1)$ covering, every such optimal covering in turn has the forced degree sequence
$(10,9^{10})$, and the six pairs of points of degree $10$ form a perfect matching, which we normalise
once and for all. This reduces Theorem~\ref{thm:main} to a statement about $20$-subsets of the $462$
candidate link blocks. That statement is decided by an exhaustive case analysis over the orbits
of a group of order $3840$: Section~\ref{sec:symmetry} constructs the group,
Section~\ref{sec:encoding} encodes the cases as two families of propositional satisfiability
instances, and Section~\ref{sec:cases} resolves them. The analysis also yields, almost for free, a
self-contained certified proof of the uniqueness of the optimal $3$-$(11,5,1)$ covering
(Proposition~\ref{prop:unique}).
Section~\ref{sec:verification} describes the verification chain and states precisely what is and is
not certified; in particular, the lower bound depends on no tabulated covering number --- its only
numerical input, $\Cov{10}{4}{2}\ge9$, is proved by a certificate in the deposit.
Section~\ref{sec:concluding} records the Tur\'an reformulation and the bounds that improve as a
consequence.

A contemporaneous, methodologically distinct computer-assisted proof of
Theorem~\ref{thm:main} was announced by
D.~Bertram on 24~July~2026~\cite{Bertram2026}. That proof uses a different reduction, based on an
enumeration of $266$ link orbits; the present proof instead uses the forced perfect matching, a
fixed group of order $3840$, and certified non-extendability blockers.

\begin{remark}\label{rem:novelty}
Neither of the two strongest general improvements to \eqref{eq:schonheim} yields $41$ here. The
increment of Mills and Mullin~\cite{MillsMullin} requires
$v\,\Covl{v-1}{k-1}{t-1} \not\equiv 0 \pmod{k}$; at $(v,k,t)=(12,6,4)$ we have
$12\cdot 20 = 240 = 40\cdot 6$, so the hypothesis fails. (Its second hypothesis fails as well: one
would need $\Cov{11}{5}{3} = \binom{11}{r-1}\binom{5}{r-1}^{-1}\Cov{12-r}{6-r}{4-r}$ for some
$r \in \{2,3,4\}$, and the three values are $99/5$, $33/2$ and $33/2$, none equal to $20$.) For the
bounds of Horsley and Singh~\cite{HorsleySingh2017}, parametrised by $s \le \lfloor t/2 \rfloor$,
direct evaluation at $s=1$ and $s=2$ shows that the resulting inequalities do not reach $41$ for
these parameters; the evaluation is reproduced in the archived deposit.
\end{remark}

\section{Rigidity of a hypothetical 40-block covering}\label{sec:structure}

Everything in this section is elementary counting, except for one certified satisfiability fact
(Lemma~\ref{lem:c1042}), the seed of the entire lower-bound chain.

For a covering $(\pts,\blocks)$ and $p \in \pts$, the \emph{link} of $p$ is the collection
\[
  \blocks_p \;=\; \{\, B \setminus \{p\} \;:\; B \in \blocks,\ p \in B \,\} .
\]
Distinct blocks through $p$ give distinct members of $\blocks_p$, so $|\blocks_p| = b(p)$.

\begin{lemma}\label{lem:link}
Let $(\pts,\blocks)$ be a $t$-$(v,k,1)$ covering and $p \in \pts$. Then
$(\pts\setminus\{p\},\blocks_p)$ is a $(t-1)$-$(v-1,k-1,1)$ covering; consequently
$b(p) \ge \Cov{v-1}{k-1}{t-1}$, and \eqref{eq:schonheim} follows by summing over $p$.
\end{lemma}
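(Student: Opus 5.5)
The plan is to verify the covering property of the link directly from the definition and then obtain the two consequences by counting. First I will check that $\blocks_p$ is a legitimate collection of blocks on the point set $\pts\setminus\{p\}$. Each member $B\setminus\{p\}$ with $p\in B$ is a subset of $\pts\setminus\{p\}$ of size exactly $k-1$. As already observed before the lemma, the map $B\mapsto B\setminus\{p\}$ is injective on blocks through $p$, since $B$ is recovered as $(B\setminus\{p\})\cup\{p\}$. Hence $|\blocks_p|=b(p)$.

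Next comes the covering property. Let $T\subseteq \pts\setminus\{p\}$ be any $(t-1)$-subset. Then $T\cup\{p\}$ is a $t$-subset of $\pts$, so by hypothesis some block $B\in\blocks$ contains it. This block contains $p$, so $B\setminus\{p\}\in\blocks_p$. It also contains $T$, because $p\notin T$. Thus every $(t-1)$-subset of the $(v-1)$-set is covered, and $(\pts\setminus\{p\},\blocks_p)$ is a $(t-1)$-$(v-1,k-1,1)$ covering. By the definition of the covering number as a minimum, $b(p)=|\blocks_p|\ge \Cov{v-1}{k-1}{t-1}$.

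Finally, to recover \eqref{eq:schonheim}, I will double count incident pairs $(p,B)$ with $p\in B\in\blocks$. Counting by blocks gives $k|\blocks|$, since every block has exactly $k$ points. Counting by points gives $\sum_{p}b(p)\ge v\,\Cov{v-1}{k-1}{t-1}$. Therefore $|\blocks|\ge \frac{v}{k}\Cov{v-1}{k-1}{t-1}$, and integrality gives the ceiling. Applying this to a minimum covering yields \eqref{eq:schonheim} for $\lambda=1$, which is the case the lemma concerns. The general-$\lambda$ version goes through verbatim.

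There is no real obstacle here. The only point needing care is the injectivity of $B\mapsto B\setminus\{p\}$, which ensures $|\blocks_p|=b(p)$ rather than merely $\le b(p)$; the bound direction needs exactly this equality. It holds because blocks are sets, not multisets with repeated members.
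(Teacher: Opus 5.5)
Your proof is correct and matches the paper's argument: lift a $(t-1)$-subset $T$ to $T\cup\{p\}$, take a block covering it, and then double count incidences to get $k|\blocks|=\sum_p b(p)\ge v\,\Cov{v-1}{k-1}{t-1}$. One small correction to your closing remark: the bound $b(p)\ge\Cov{v-1}{k-1}{t-1}$ does not need injectivity, only $|\blocks_p|\le b(p)$, which holds automatically because $\blocks_p$ is the image of the blocks through $p$.
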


\begin{proof}
Let $S$ be a $(t-1)$-subset of $\pts \setminus \{p\}$. Then $S \cup \{p\}$ is a $t$-subset of
$\pts$, so it lies in some block $B \in \blocks$; that block contains $p$, and
$S \subseteq B\setminus\{p\} \in \blocks_p$, a $(k-1)$-set. For the last assertion,
$k\,|\blocks| = \sum_{p} b(p) \ge v\,\Cov{v-1}{k-1}{t-1}$.
\end{proof}

\begin{lemma}\label{lem:c1042}
$\Cov{10}{4}{2} \ge 9$.
\end{lemma}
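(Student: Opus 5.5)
The plan is a direct, computer-free counting argument. Sch\"onheim's bound \eqref{eq:schonheim} gives only $\lceil \tfrac{10}{4}\cdot 3\rceil = 8$, so it suffices to rule out a $2$-$(10,4,1)$ covering $(\pts,\blocks)$ with $|\blocks| = 8$. Suppose one exists.

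\textbf{Degrees.} By Lemma~\ref{lem:link} the link of each point $p$ is a $1$-$(9,3,1)$ covering, so $b(p) \ge 3$. Since $\sum_p b(p) = 4\cdot 8 = 32$, the total excess over $3$ is exactly $2$. Hence either one point has degree $5$ and the rest degree $3$, or two points $x,y$ have degree $4$ and the other eight have degree $3$.

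\textbf{Pair multiplicities.} If $b(p)=3$, the three blocks through $p$ have $3\cdot 3 = 9$ slots for the $9$ other points. They must therefore partition $\pts\setminus\{p\}$, so every pair containing $p$ lies in exactly one block. On the other hand, the blocks contain $8\cdot 6 = 48$ pairs with multiplicity against $\binom{10}{2} = 45$ pairs, so
\[
\sum_{\{u,w\}}\bigl(b(\{u,w\})-1\bigr) = 3 .
\]
Any over-covered pair must avoid all degree-$3$ points.
\begin{itemize}
\item In the first case only one point has degree above $3$, so no such pair exists, which is a contradiction.
\item In the second case the only candidate is $\{x,y\}$, so $b(\{x,y\}) = 4$. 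Then every block through $x$ contains $y$, and vice versa.
\end{itemize}

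\textbf{Finishing the second case.} Exactly $4$ blocks meet $\{x,y\}$, and the other $4$ blocks avoid both $x$ and $y$. A block containing $x$ and $y$ contains exactly one pair inside $\pts\setminus\{x,y\}$, so these four blocks cover at most $4$ of the $\binom{8}{2} = 28$ pairs inside $\pts\setminus\{x,y\}$. The remaining $4$ blocks cover at most $4\cdot 6 = 24$ more. Together that is at most $28$ pairs, so equality must hold throughout, with no pair inside $\pts\setminus\{x,y\}$ covered twice. Then every pair among the eight degree-$3$ points is covered exactly once. Since the degree-$3$ points also see $x$ and $y$ exactly once each, the eight points and the four blocks avoiding $x,y$, together with the four pairs $B\setminus\{x,y\}$, would form a $2$-$(8,\{2,4\},1)$ design.

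To rule this out, count at a degree-$3$ point $p$. Its three blocks are one block through $\{x,y\}$, which contributes one partner among the eight, and two blocks avoiding $x,y$, which contribute three partners each. That gives exactly $7$ partners, so this count alone is consistent. Instead, count incidences: the four avoiding blocks have $16$ incidences, and each of the $8$ points lies in exactly $2$ of them. So we have $4$ blocks of size $4$ on $8$ points, each point in $2$ blocks, with any two blocks meeting in at most one point (else a pair is covered twice). The number of intersecting block pairs is then $8\cdot\binom{2}{2} = 8 > \binom{4}{2} = 6$. This contradiction completes the proof.

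The main obstacle I expect is bookkeeping in the last case, namely making sure that the equality forced by $28 = 4 + 24$ is correctly translated into the intersection count. If that step proves delicate, the fallback is the paper's approach: encode the $8$-block instance as a small SAT formula and certify it unsatisfiable.
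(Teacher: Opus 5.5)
Your argument is correct, and it takes a genuinely different route from the paper's. Both proofs start from the same observation: every point has degree at least $3$, and a point of degree $3$ has three blocks that partition the other nine points.

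The paper stops there. It normalises one degree-$3$ point to the blocks $\{1,2,3,4\},\{1,5,6,7\},\{1,8,9,10\}$ and hands the rest to a SAT solver, whose unsatisfiability certificate is checked by \texttt{drat-trim} and \texttt{cake\_lpr}.

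You instead carry the counting through to the end:
\begin{itemize}
\item The degree excess is $2$ and the pair excess is $48-45=3$. Pairs through a degree-$3$ point are covered exactly once, so the $(5,3^9)$ case dies immediately.
\item In the $(4^2,3^8)$ case the same fact forces $b(\{x,y\})=4$.
\item The four blocks avoiding $x,y$ are then $4$-sets on eight points, each point in exactly two of them, pairwise meeting in at most one point. The flag count $8>\binom{4}{2}$ finishes the proof.
\end{itemize}
The detour through the $2$-$(8,\{2,4\},1)$ design and the partner count at $p$ is unnecessary, since the flag count alone suffices. Your worry about the last step is also unfounded.

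What your route buys is exactly what the paper wants from this lemma, namely that the seed of the lower-bound chain depend on no external input. You achieve it with a short hand proof rather than a certificate, so Lemma~\ref{lem:c1042} leaves the trusted computational surface entirely. The paper's route buys only uniformity with the rest of its certified pipeline, at the cost of an extra solver instance for a fact that needs none.
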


\begin{proof}
Suppose a $2$-$(10,4,1)$ covering with at most $8$ blocks exists. Every point has degree at least $3$, because each block
through a point covers only three of the nine pairs containing that point. On the other hand the
$8$-block upper limit gives at most $32$ point--block incidences, so some point has degree at most
$\lfloor 32/10\rfloor=3$. Thus some point has degree exactly $3$; relabel it as point $1$.
Each of the three blocks through point $1$ consists of point $1$ together with three of the other
nine points. Since all nine pairs containing point $1$ must be covered, these three three-point
sets cover $\{2,\dots,10\}$, and by cardinality they partition it. Relabelling $\{2,\dots,10\}$
therefore normalises the three blocks to
\[
 \{1,2,3,4\},\qquad \{1,5,6,7\},\qquad \{1,8,9,10\}.
\]
The certificate formula has one variable per $4$-subset of $\{1,\dots,10\}$, one coverage clause
per pair, a cardinality constraint allowing at most $8$ blocks, positive units for these three
normalised blocks, and negative units for every other block through point $1$. It is
unsatisfiable, with a certificate checked as described in Section~\ref{sec:verification}.
\end{proof}

The value $\Cov{10}{4}{2}=9$ is classical; Lemma~\ref{lem:c1042} re-proves the direction we use, so
that no external table entry enters the chain of reasoning.

\begin{proposition}\label{prop:c1153}
$\Cov{11}{5}{3} = 20$.
\end{proposition}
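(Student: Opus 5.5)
The plan has two halves: a lower bound obtained from Lemma~\ref{lem:link} together with Lemma~\ref{lem:c1042}, and an upper bound obtained from an explicit $20$-block design.

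For the lower bound I will apply Lemma~\ref{lem:link} with $(v,k,t)=(11,5,3)$. Let $(\pts,\blocks)$ be any $3$-$(11,5,1)$ covering. For each point $p$, the link $\blocks_p$ is a $2$-$(10,4,1)$ covering. By Lemma~\ref{lem:c1042} this gives $b(p)\ge \Cov{10}{4}{2}\ge 9$ for every $p$. Double counting point--block incidences then gives
\[
5\,|\blocks| \;=\; \sum_{p\in\pts} b(p) \;\ge\; 11\cdot 9 \;=\; 99 .
\]
Hence $|\blocks|\ge \lceil 99/5\rceil = 20$. This is exactly \eqref{eq:schonheim} with the certified value substituted. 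The only numerical input is Lemma~\ref{lem:c1042}, so no tabulated covering number enters; this matches the paper's stated aim.

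For the upper bound I will exhibit an explicit collection of twenty $5$-subsets of an $11$-set. I would take the optimal design recorded in the La Jolla Covering Repository~\cite{LJCR}. I would then verify directly that each of the $\binom{11}{3}=165$ triples lies in at least one block. This is a finite check, done mechanically and recorded in the verification section.

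A structured construction would be preferable for a human-readable proof. For instance, one could start from a cyclic or near-cyclic family on $\mathbb{Z}_{11}$ and adjoin a few extra blocks. I expect, however, that the simplest honest route is to list the $20$ blocks and check them by computer.

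The main obstacle, such as it is, lies in the upper bound: finding a clean, checkable $20$-block $3$-$(11,5,1)$ covering. The slack is tiny, because $20\cdot\binom{5}{3}=200$ must cover $165$ triples. A naive cyclic orbit of a single base block gives only $11$ blocks, which is not a complete covering. So I would take the repository design and certify it by exhaustive triple-checking rather than try to derive it by hand. Combining the two bounds gives $\Cov{11}{5}{3}=20$.
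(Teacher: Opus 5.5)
Your proof is correct and matches the paper's: the lower bound is \eqref{eq:schonheim} seeded with the certified $\Cov{10}{4}{2}\ge 9$ of Lemma~\ref{lem:c1042}, giving $\lceil 99/5\rceil=20$, and the upper bound is an explicit machine-verified $20$-block covering. The only difference is cosmetic: the paper's witness is the design $E$ of Remark~\ref{rem:repair}, extracted as a SAT model, rather than the repository design, and by Proposition~\ref{prop:unique} the two are isomorphic anyway.
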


\begin{proof}
By Lemma~\ref{lem:link} and \eqref{eq:schonheim} with Lemma~\ref{lem:c1042},
$\Cov{11}{5}{3} \ge \lceil \tfrac{11}{5} \cdot 9 \rceil = \lceil 19.8 \rceil = 20$. Conversely, an
explicit $3$-$(11,5,1)$ covering with $20$ blocks is archived in the deposit and machine-verified;
it is the design $E$ of Remark~\ref{rem:repair} below.
\end{proof}

The value $\Cov{11}{5}{3}=20$ is due to Mills~\cite{MillsC1153};
Proposition~\ref{prop:c1153} gives a short, independently checkable rederivation.

\begin{lemma}\label{lem:degrees}
Every $3$-$(11,5,1)$ covering with $20$ blocks has degree sequence $(10,9^{10})$: exactly one point
of degree $10$ and ten points of degree $9$.
\end{lemma}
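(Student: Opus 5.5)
Let $(\pts,\blocks)$ be a $3$-$(11,5,1)$ covering with $|\blocks|=20$. We will squeeze the degrees between a lower bound and the incidence total. By Lemma~\ref{lem:link}, the link of each point $p$ is a $2$-$(10,4,1)$ covering with $b(p)$ blocks. Lemma~\ref{lem:c1042} then gives $b(p)\ge 9$ for every $p$. Counting incidences,
\[
  \sum_{p\in\pts} b(p) \;=\; 5\cdot 20 \;=\; 100 \;=\; 11\cdot 9 + 1 .
\]
Write $b(p)=9+e_p$ with $e_p\ge 0$. Then $\sum_p e_p=1$, so exactly one point has $e_p=1$ and all the others have $e_p=0$. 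This is precisely the degree sequence $(10,9^{10})$.

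The only step that needs any care is making sure the lower bound $b(p)\ge 9$ holds for every point, not just on average. That is exactly why Lemma~\ref{lem:c1042} is applied pointwise through the link construction. No equality or extremal structure of $2$-$(10,4,1)$ coverings is needed here, so I see no real obstacle: the argument is the Schönheim step made tight, with the slack $100-99=1$ forcing the result.
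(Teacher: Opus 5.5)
Your proof is correct and matches the paper's argument: Lemma~\ref{lem:link} with Lemma~\ref{lem:c1042} gives $b(p)\ge \Cov{10}{4}{2}\ge 9$ at every point, and the incidence total $100 = 9\cdot 11 + 1$ then forces ten points of degree $9$ and one of degree $10$.
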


\begin{proof}
By Lemma~\ref{lem:link} applied to the covering itself, every point $q$ satisfies
$b(q) \ge \Cov{10}{4}{2} \ge 9$. Counting incidences, $\sum_q b(q) = 20 \cdot 5 = 100 = 9\cdot11+1$.
Eleven summands, each at least $9$, totalling $9\cdot 11 + 1$, must be ten nines and one ten.
\end{proof}

\begin{lemma}\label{lem:forcing}
Let $(\pts,\blocks)$ be a $4$-$(12,6,1)$ covering with $|\blocks| = 40$. Then $b(p) = 20$ for every
$p \in \pts$, and the link of every point is an optimal $3$-$(11,5,1)$ covering with $20$ blocks
and degree sequence $(10,9^{10})$.
\end{lemma}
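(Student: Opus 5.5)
The plan is a direct counting argument built on Lemma~\ref{lem:link}, Proposition~\ref{prop:c1153} and Lemma~\ref{lem:degrees}. First I would bound each degree from below. By Lemma~\ref{lem:link}, for every $p \in \pts$ the link $(\pts\setminus\{p\},\blocks_p)$ is a $3$-$(11,5,1)$ covering. Since $|\blocks_p| = b(p)$, Proposition~\ref{prop:c1153} gives $b(p) \ge \Cov{11}{5}{3} = 20$ for all twelve points.

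Next I would count incidences. Each block has six points, so
\[
  \sum_{p\in\pts} b(p) \;=\; 6\,|\blocks| \;=\; 6\cdot 40 \;=\; 240 \;=\; 12\cdot 20 .
\]
There are twelve summands, each at least $20$, with total $12\cdot 20$. Hence every summand equals $20$, so $b(p)=20$ for every $p$.

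It follows that each link $\blocks_p$ is a $3$-$(11,5,1)$ covering with exactly $20$ blocks. These are distinct $5$-subsets of $\pts\setminus\{p\}$, since distinct blocks through $p$ give distinct link members, as already noted before Lemma~\ref{lem:link}. By Proposition~\ref{prop:c1153}, such a covering is optimal. Lemma~\ref{lem:degrees} then gives its degree sequence $(10,9^{10})$, where degrees are taken within the link, so that the link degree of $q$ is $b(\{p,q\})$.

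I expect no real obstacle, since all the substantive input is already in Proposition~\ref{prop:c1153} and Lemma~\ref{lem:degrees}. The only point needing care is that ``covering with $20$ blocks'' means $20$ distinct blocks, which the injectivity of $B \mapsto B\setminus\{p\}$ guarantees, so that Lemma~\ref{lem:degrees} applies verbatim.
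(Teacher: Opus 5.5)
Your proposal is correct and follows the paper's proof essentially step for step. You bound $b(p)\ge 20$ via Lemma~\ref{lem:link} and Proposition~\ref{prop:c1153}, force equality from $\sum_p b(p)=240=12\cdot 20$, and then invoke Lemma~\ref{lem:degrees} on each link. Your remark on distinctness of link blocks is harmless but not actually needed, since the incidence count in Lemma~\ref{lem:degrees} holds regardless.
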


\begin{proof}
By Lemma~\ref{lem:link} and Proposition~\ref{prop:c1153}, $b(p) \ge \Cov{11}{5}{3} = 20$ for every
$p$. Counting incidences,
$\sum_{p} b(p) = 40 \cdot 6 = 240 = 12 \cdot 20$, so each of the twelve summands equals $20$
exactly. Each link is then a $3$-$(11,5,1)$ covering (Lemma~\ref{lem:link}) with
$20 = \Cov{11}{5}{3}$ blocks, and Lemma~\ref{lem:degrees} gives its degree sequence.
\end{proof}

\begin{lemma}\label{lem:matching}
In a $4$-$(12,6,1)$ covering with $40$ blocks, every pair of points has degree $9$ or $10$.
The six degree-$10$ pairs form a perfect matching of the point set.
\end{lemma}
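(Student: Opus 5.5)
The plan is to read pair degrees off the links. For distinct points $p,q$, the blocks of the covering containing $\{p,q\}$ correspond bijectively to the blocks of the link $\blocks_p$ containing $q$. So $b(\{p,q\})$ equals the degree of $q$ in $\blocks_p$. By Lemma~\ref{lem:forcing}, $\blocks_p$ is an optimal $3$-$(11,5,1)$ covering with degree sequence $(10,9^{10})$ on $\pts\setminus\{p\}$. Hence for every $q\ne p$ we get $b(\{p,q\})\in\{9,10\}$, which is the first assertion.

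For the matching, I would fix $p$ and use the degree sequence again. Exactly one point $q\neq p$ has degree $10$ in $\blocks_p$, and all others have degree $9$. Therefore exactly one pair containing $p$ has degree $10$.

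Now form the graph on $\pts$ whose edges are the degree-$10$ pairs. Since $b(\{p,q\})$ is symmetric in $p$ and $q$, this graph is well defined and undirected. By the previous step every vertex has degree exactly $1$ in it. A $1$-regular graph on $12$ vertices is a perfect matching, so there are exactly six degree-$10$ pairs, as claimed.

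A consistency check is double counting: $\sum_{\{p,q\}} b(\{p,q\}) = 40\binom{6}{2} = 600$. On the other side, $66\cdot 9 + 6 = 600$, which matches.

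I expect no real obstacle. The only point needing care is the bijection between blocks through $\{p,q\}$ and link blocks through $q$, together with the symmetry that makes the degree-$10$ relation well defined from both endpoints. Both follow from the definition of the link.
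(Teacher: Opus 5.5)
Your proof is correct and is essentially the paper's argument. The paper bounds $b(pq)\ge 9$ directly, because the blocks through $\{p,q\}$ with that pair deleted form a $2$-$(10,4,1)$ covering, and then sums $\sum_{q\ne p} b(pq)=5b(p)=100$; this is exactly the counting behind Lemma~\ref{lem:degrees} applied to the link $\blocks_p$, which you invoke through Lemma~\ref{lem:forcing}, and your symmetry/$1$-regular-graph step finishing the matching is the same as the paper's.
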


\begin{proof}
For distinct points $p,q$, the blocks containing $\{p,q\}$, with that pair deleted, form a
$2$-$(10,4,1)$ covering. Hence $b(pq)\ge\Cov{10}{4}{2}\ge9$ by
Lemma~\ref{lem:c1042}. Fix $p$. Each block through $p$ contains five pairs incident with $p$, so
\[
  \sum_{q\ne p} b(pq)=5b(p)=100
\]
by Lemma~\ref{lem:forcing}. The sum has eleven terms, each at least $9$; consequently exactly one
term is $10$ and the other ten are $9$. Thus every point has a unique degree-$10$ partner.
Because pair degree is symmetric, these partner relations are six disjoint pairs covering all
twelve points.
\end{proof}

Lemma~\ref{lem:forcing} is the reason these parameters are both tractable and delicate: the bound
\eqref{eq:known} is attained with no slack whatsoever. A single point of degree $21$ would force
another of degree $19$, which is impossible. The entire problem is thereby transported to the
$11$-point link structure.

We fix a point $p_0 \in \pts$, the \emph{root point}. By Lemma~\ref{lem:matching} we may label the
perfect matching of degree-$10$ pairs as
\[
 \{p_0,1\},\ \{2,3\},\ \{4,5\},\ \{6,7\},\ \{8,9\},\ \{10,11\}.
\]
This labelling is without loss of generality for the full $12$-point covering. In particular, the unique point of
degree $10$ in the link $\blocks_{p_0}$ is point $1$. By Lemma~\ref{lem:forcing},
$\blocks_{p_0}$ is then a set of
$20$ of the $\binom{11}{5} = 462$ possible $5$-subsets of $\{1,\dots,11\}$, covering all
$\binom{11}{3} = 165$ triples, with $b(1)=10$ and $b(q)=9$ for $q \ge 2$.

Optimal $3$-$(11,5,1)$ coverings exist (Proposition~\ref{prop:c1153}), so
Lemma~\ref{lem:forcing} alone does not finish the argument. What must be shown is that no such
covering \emph{extends}: none of them arises as the link of a point in a $40$-block covering. The
remaining sections do this.

\section{Symmetry}\label{sec:symmetry}

The case analysis of Section~\ref{sec:cases} branches over the orbits of a fixed group of
symmetries of candidate links; this section constructs the group and computes its orbits.

Retain the five pairs $\{2,3\},\{4,5\},\{6,7\},\{8,9\},\{10,11\}$ of the normalisation above, and let
$\gp$ be the group of permutations of $\{1,\dots,11\}$ generated by the transpositions within the
pairs together with the permutations of the five pairs as blocks. Then $\gp \cong C_2 \wr S_5$,
\begin{equation}\label{eq:grouporder}
  |\gp| \;=\; 2^5 \cdot 5! \;=\; 3840 ,
\end{equation}
and every element of $\gp$ fixes the point $1$. Thus $\gp$ is exactly the stabiliser, on the
$11$ link points, of the normalised perfect matching. The group was rebuilt from this description,
and the $3840$ listed elements were verified to be pairwise distinct bijections fixing the point
$1$ and closed under composition and inversion.

\begin{proposition}\label{prop:orbits}
Acting on the $462$ five-subsets of $\{1,\dots,11\}$, the group $\gp$ has exactly six orbits.
Writing the \emph{type} of a $5$-set $S$ as the multiset of its intersection sizes with the five
pairs, together with whether $1 \in S$, the orbits and their sizes are:
\begin{center}
\begin{tabular}{llll}
\toprule
 & type (pairs) & size & orbit \\
\midrule
$1 \in S$: & $1{+}1{+}1{+}1$ & $\binom{5}{4}\,2^4 = 80$ & $(=\mathcal{O}_0)$\\
           & $2{+}1{+}1$     & $5\binom{4}{2}2^2 = 120$ & $(=\mathcal{O}_1)$\\
           & $2{+}2$         & $\binom{5}{2} = 10$      & $(=\mathcal{O}_2)$\\
$1 \notin S$: & $1{+}1{+}1{+}1{+}1$ & $2^5 = 32$            & $(=\mathcal{O}_3)$\\
              & $2{+}1{+}1{+}1$     & $5\binom{4}{3}2^3 = 160$ & $(=\mathcal{O}_4)$\\
              & $2{+}2{+}1$         & $\binom{5}{2}\cdot3\cdot2 = 60$ & $(=\mathcal{O}_5)$\\
\bottomrule
\end{tabular}
\end{center}
The sizes sum to $210 + 252 = 462$, and
$\mathcal{O}_0 \cup \mathcal{O}_1 \cup \mathcal{O}_2$ is exactly the set of
$\binom{10}{4} = 210$ blocks containing the point $1$.
\end{proposition}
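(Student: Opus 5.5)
The plan is to show that the ``type'' of a $5$-set is a complete invariant for the action of $\gp$, and then to count each type. First, the type is $\gp$-invariant. Every element of $\gp$ fixes $1$ and maps each pair $\{2,3\},\dots,\{10,11\}$ onto some pair. Hence $g$ preserves whether $1\in S$. It also permutes the five intersection sizes $|S\cap P_i|$ among the pairs, so their multiset is unchanged. Thus each orbit lies inside a single type class.

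Next we list the possible types. If $1\in S$, the remaining four points of $S$ lie in the pairs with intersection sizes in $\{0,1,2\}$ summing to $4$. The nonzero parts give the partitions $1{+}1{+}1{+}1$, $2{+}1{+}1$ and $2{+}2$. If $1\notin S$, the sizes sum to $5$ with at most five parts, each at most $2$. This gives $1{+}1{+}1{+}1{+}1$, $2{+}1{+}1{+}1$ and $2{+}2{+}1$. So there are exactly six type classes.

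To see that each class is a single orbit, take two sets $S,S'$ of the same type. Choose a permutation of the five pairs, as blocks, that sends the pairs meeting $S$ in $j$ points to the pairs meeting $S'$ in $j$ points, for each $j\in\{0,1,2\}$. This is possible because the multisets of sizes agree. After applying it, $S$ and $S'$ meet every pair in the same number of points. For each pair met in exactly one point, apply the within-pair transposition where needed so that the chosen points agree. Pairs met in $0$ or $2$ points need no adjustment. The point $1$ is handled automatically, since both sets contain it or both omit it. The resulting element of $\gp$ maps $S$ to $S'$, so type classes are exactly orbits.

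Finally we count each class directly. First choose which pairs receive intersection size $2$ and which receive size $1$. Then multiply by a factor $2$ for each pair of size $1$, since we choose one of its two points. This gives:
\begin{itemize}
\item $\binom54 2^4=80$ for type $1{+}1{+}1{+}1$ with $1\in S$;
\item $5\binom42 2^2=120$ for $2{+}1{+}1$ with $1\in S$;
\item $\binom52=10$ for $2{+}2$ with $1\in S$;
\item $2^5=32$ for $1{+}1{+}1{+}1{+}1$ with $1\notin S$;
\item $5\binom43 2^3=160$ for $2{+}1{+}1{+}1$ with $1\notin S$;
\item $\binom52\cdot 3\cdot 2=60$ for $2{+}2{+}1$ with $1\notin S$.
\end{itemize}
As checks, $80+120+10=210=\binom{10}{4}$ and $32+160+60=252=\binom{10}{5}$. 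The first three orbits together are therefore all $5$-sets containing $1$, and the total is $462$.

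The only step needing care is the transitivity argument, specifically that the block-permutation can be chosen independently of the within-pair flips. This holds because $\gp$ is the full wreath product $C_2\wr S_5$, so any block permutation can be combined with any choice of flips. The arithmetic, and an optional machine enumeration over the $3840$ group elements, are routine.
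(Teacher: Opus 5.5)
Your proof is correct and follows essentially the same route as the paper: the type is invariant under the group, the group is transitive on each type (permute the pairs to match intersection sizes, then swap within the singly-met pairs), and each class is counted directly. Your explicit list of the admissible partitions and the checks $80+120+10=\binom{10}{4}$ and $32+160+60=\binom{10}{5}$ spell out in full the steps the paper calls elementary.
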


\begin{proof}
The type is $\gp$-invariant, since $\gp$ fixes the point $1$ and permutes the pairs. Conversely
$\gp$ is transitive on each type: given two $5$-sets of the same type, map pairs to pairs matching
intersection sizes, then swap within pairs to match the chosen points. So the orbits are exactly
the types. The counts are elementary: e.g.\ for type $2{+}1{+}1$ through the point $1$, choose the
full pair ($5$ ways), the two half-met pairs ($\binom{4}{2}$), and one point in each ($2^2$).
The remaining rows are identical in kind, and the two impossible patterns ($2{+}2{+}1$ through
$1$ would need five points besides $1$; $1^4$ avoiding $1$ needs a fifth point) do not occur.
\end{proof}

For a candidate link $\link$, define its \emph{root index} $r$ as the least $i$ such that $\link$
contains a block of $\mathcal{O}_i$. Since $b(1) = 10 > 0$, every candidate link contains blocks
through the point $1$, all of which lie in $\mathcal{O}_0 \cup \mathcal{O}_1 \cup \mathcal{O}_2$;
hence $r \in \{0,1,2\}$, and the three cases $r=0$, $r=1$, $r \ge 2$ are exhaustive and mutually
exclusive. (The instance closing the case $r \ge 2$ in Section~\ref{sec:cases} does not use the
bound $r \le 2$, so the analysis remains complete even without this observation; a separate
blocker-free certificate, called the $r\ge 3$ auxiliary instance in the deposit, confirms directly
that no candidate link avoids all blocks through the point $1$.)

\section{The two families of satisfiability instances}\label{sec:encoding}

The computation has two layers. \emph{Link instances} (Layer A) decide, case by case, whether a
candidate link with prescribed properties exists. \emph{Extension instances} (Layer B) certify, for
individual optimal $3$-$(11,5,1)$ coverings, that they do not extend to a $40$-block covering.
Layer B feeds Layer A through the blocker clauses described below.

\subsection{Link instances}\label{ssec:linkcnf}

\paragraph{Variables.} One Boolean variable $x_B$ for each of the $462$ five-subsets $B$ of
$\{1,\dots,11\}$, reading $x_B$ as $B \in \link$.

\paragraph{Coverage.} For each of the $165$ triples $S$, the clause
$\bigvee_{B \supseteq S} x_B$, a disjunction over the $\binom{8}{2} = 28$ blocks containing $S$:
this family of clauses says exactly that $\link$ is a $3$-$(11,5,1)$ covering.

\paragraph{Degrees.} By Lemmas~\ref{lem:degrees} and~\ref{lem:forcing} and our choice of
labelling, the link satisfies $b(1) = 10$ and $b(q) = 9$ for $q = 2,\dots,11$. Each of these eleven
constraints is an exact cardinality constraint over the $\binom{10}{4} = 210$ variables $x_B$ with
$q \in B$. Together they imply $|\link| = 20$, since $\sum_q b(q) = 100 = 5\,|\link|$; no separate
global cardinality constraint is needed. Cardinality constraints are not natively clausal, so the whole computation
was carried out under two structurally different encodings, a sequential counter~\cite{Sinz2005} and a
totalizer-based encoding, both as supplied by PySAT~\cite{PySAT}, producing instances with $40{,}642$ and $20{,}482$
variables respectively. Both encoders were validated against ground truth before use: for
fourteen small $(n,k)$ cases, the models of the encoded constraint were enumerated over all $2^n$
assignments to the input literals and confirmed to be exactly the weight-$k$ assignments.

\paragraph{Case constraints.} A case of the analysis is imposed by unit clauses: positive units
fixing canonical representative blocks to be present, and negative units forbidding the blocks of
excluded orbits. Section~\ref{sec:cases} specifies the cases and proves their exhaustiveness.

\paragraph{Blocker clauses.} Purely negative clauses of width exactly $20$,
\begin{equation}\label{eq:blockerclause}
  \neg x_{B_1} \vee \neg x_{B_2} \vee \cdots \vee \neg x_{B_{20}} ,
\end{equation}
one for each member of a library of optimal $3$-$(11,5,1)$ coverings certified not to extend
(Section~\ref{ssec:blocker}). Since the link has exactly $20$ blocks, the clause
\eqref{eq:blockerclause} excludes exactly one candidate link, namely
$\{B_1,\dots,B_{20}\}$ itself.

\subsection{Extension instances}\label{ssec:extension}

Let $\link$ be a fixed optimal $3$-$(11,5,1)$ covering with degree sequence normalised as in
Section~\ref{sec:structure}. The instance $E(\link)$ asks whether $\link$ extends to a $40$-block
covering of $\pts = \{1,\dots,11\} \cup \{p_0\}$. If it does, the $20$ blocks through $p_0$ are
exactly $\{B \cup \{p_0\} : B \in \link\}$, and the remaining $20$ blocks --- the
\emph{co-blocks} --- are $6$-subsets of $\{1,\dots,11\}$. The co-blocks must cover every
$4$-subset $Q$ of $\{1,\dots,11\}$ that is not already covered by $\link$ (i.e., with
$Q \not\subseteq B$ for all $B \in \link$; a quadruple containing $p_0$ is covered through
$\link$ automatically, because $\link$ covers triples).

The remaining constraints encode the pair degrees forced by Lemma~\ref{lem:matching}. Define
\[
 d(qr)=
 \begin{cases}
 10,&\{q,r\}\in\bigl\{\{2,3\},\{4,5\},\{6,7\},\{8,9\},\{10,11\}\bigr\},\\
 9,&\text{otherwise},
 \end{cases}
\]
for pairs $\{q,r\}\subseteq\{1,\dots,11\}$. If $b_\link(qr)$ is the number of root-link blocks
containing both $q$ and $r$, then exactly
\[
 d(qr)-b_\link(qr)
\]
co-blocks must contain $\{q,r\}$. These are the residual pair-degree equations after the perfect
matching has been normalised as in Section~\ref{sec:structure}. Summing the $55$ residual
equations gives $5\cdot 10 + 50\cdot 9 - 20\binom{5}{2} = 300$ residual pair incidences; since
each selected co-block contributes $\binom{6}{2}=15$ of them, every satisfying assignment of
$E(\link)$ selects exactly $300/15 = 20$ co-blocks.

Accordingly, $E(\link)$ has one variable per $6$-subset of $\{1,\dots,11\}$
($\binom{11}{6}=462$ of them), one width-$21$ clause per uncovered quadruple, and
$\binom{11}{2}=55$ exact cardinality constraints, one for each pair of link points, fixing the
residual value $d(qr)-b_\link(qr)$. The link itself is substituted into these bounds rather than
asserted by unit clauses. Every $40$-block covering with link $\link$ at $p_0$ therefore gives a
satisfying assignment of $E(\link)$.

\begin{lemma}\label{lem:exttest}
If $E(\link)$ is unsatisfiable, then no $4$-$(12,6,1)$ covering with $40$ blocks has $\link$ as
the root link under the perfect-matching normalisation of Section~\ref{sec:structure}.
\end{lemma}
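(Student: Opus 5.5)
We argue by contraposition: we show that any $40$-block covering $(\pts,\blocks)$ whose root link is $\link$, under the normalisation of Section~\ref{sec:structure}, gives a satisfying assignment of $E(\link)$. So fix such a covering, with $p_0$ the root point and the degree-$10$ pairs labelled $\{p_0,1\},\{2,3\},\dots,\{10,11\}$. Let $\blocks'$ be the set of blocks not containing $p_0$. We set $y_C=1$ exactly when $C\in\blocks'$, for each $6$-subset $C$ of $\{1,\dots,11\}$. This is well defined because every block avoiding $p_0$ is a $6$-subset of $\{1,\dots,11\}$. By Lemma~\ref{lem:forcing}, $b(p_0)=20$, so $|\blocks'|=20$; and since $\blocks$ is a set of blocks, the $y_C$ describe $\blocks'$ faithfully.

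\emph{Coverage clauses.} Let $Q\subseteq\{1,\dots,11\}$ be a $4$-set not contained in any $B\in\link$. Some block $D\in\blocks$ contains $Q$. If $p_0\in D$, then $Q\subseteq D\setminus\{p_0\}\in\blocks_{p_0}=\link$, which contradicts the choice of $Q$. Hence $D\in\blocks'$, and the clause for $Q$ is satisfied. Only such quadruples carry clauses in $E(\link)$, so all coverage clauses hold.

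\emph{Pair-degree constraints.} Take a pair $\{q,r\}\subseteq\{1,\dots,11\}$. The blocks containing it split into those through $p_0$ and those in $\blocks'$. The blocks through $p_0$ containing $\{q,r\}$ correspond bijectively, via $B\mapsto B\setminus\{p_0\}$, to the blocks of $\link$ containing $\{q,r\}$, so there are $b_\link(qr)$ of them. By Lemma~\ref{lem:matching} and the chosen labelling, $b(qr)=d(qr)$: the matching pairs inside $\{1,\dots,11\}$ are exactly $\{2,3\},\dots,\{10,11\}$ with degree $10$, and every other pair, including all pairs $\{1,r\}$ (because $1$ is matched with $p_0$), has degree $9$. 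Therefore exactly $d(qr)-b_\link(qr)$ members of $\blocks'$ contain $\{q,r\}$, which is the exact cardinality constraint in $E(\link)$.

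\emph{Encoding.} Finally we pass from the abstract constraints to the CNF. The cardinality encodings were validated to have exactly the weight-$k$ assignments as projections onto the input literals, so any assignment of the $y_C$ meeting the constraints extends to the auxiliary variables. Thus $E(\link)$ is satisfiable, and the contrapositive gives the lemma.

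The main point needing care is the identification $d(qr)=b(qr)$. It relies on the normalisation fixing which pairs are matched, in particular that point $1$ is paired with $p_0$, so that no pair inside $\{1,\dots,11\}$ containing $1$ has degree $10$. The rest is direct bookkeeping.
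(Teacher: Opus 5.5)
Your proof is correct and is the paper's argument. The co-blocks of such a covering give a satisfying assignment of $E(\link)$: the coverage clauses hold because any block containing an uncovered quadruple must avoid $p_0$, and the $55$ residual equations hold by Lemma~\ref{lem:matching} together with the normalisation pairing point $1$ with $p_0$, so you are just spelling out the bookkeeping the paper compresses into one sentence.

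One caveat on your encoding paragraph: the fourteen small-case validations do not by themselves show that every feasible assignment of the primary variables extends to the auxiliary variables at the sizes actually used. That rests on the known correctness of the sequential-counter and totalizer constructions. The paper, however, treats $E(\link)$ at the level of its abstract constraints, so the lemma does not need this step.
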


\begin{proof}
A $40$-block covering whose normalised root link is $\link$ yields a satisfying assignment of
$E(\link)$: its co-blocks satisfy the coverage clauses by the covering property and the $55$
residual equations by Lemma~\ref{lem:matching} and the definition of $d(qr)$.
\end{proof}

\begin{lemma}\label{lem:transfer}
For every $\sigma \in \gp$, the instances $E(\link)$ and $E(\sigma \link)$ are isomorphic:
relabelling variables by $\sigma$ maps one onto the other. In particular, one unsatisfiability
certificate per $\gp$-orbit suffices for the whole orbit.
\end{lemma}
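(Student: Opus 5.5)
The plan is to exhibit the isomorphism explicitly as the variable relabelling $x_C \mapsto x_{\sigma C}$ on the $462$ six-subsets $C$ of $\{1,\dots,11\}$. We then check that it carries each constraint family of $E(\link)$ onto the corresponding family of $E(\sigma\link)$. Since $\sigma$ is a permutation of $\{1,\dots,11\}$, the induced map on $6$-subsets is a bijection. It also preserves containment: $Q \subseteq C$ if and only if $\sigma Q \subseteq \sigma C$, and $\{q,r\}\subseteq C$ if and only if $\{\sigma q,\sigma r\}\subseteq \sigma C$.

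First, the coverage clauses. A quadruple $Q$ is uncovered by $\link$ exactly when $Q\not\subseteq B$ for all $B\in\link$. This holds exactly when $\sigma Q\not\subseteq \sigma B$ for all $B$, that is, when $\sigma Q$ is uncovered by $\sigma\link$. The clause $\bigvee_{C\supseteq Q} x_C$ of $E(\link)$ is therefore mapped to the clause $\bigvee_{C'\supseteq \sigma Q} x_{C'}$ of $E(\sigma\link)$. This gives a bijection between the clause sets, since $\sigma^{-1}$ supplies the inverse.

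Second, the $55$ cardinality constraints. The constraint for $\{q,r\}$ in $E(\link)$ says that exactly $d(qr)-b_\link(qr)$ of the variables $x_C$ with $C\supseteq\{q,r\}$ are true. Under the relabelling this becomes a constraint over the variables $x_{C'}$ with $C'\supseteq\{\sigma q,\sigma r\}$, with the same right-hand side. Two invariances are needed for the right-hand sides to match. The first is $b_{\sigma\link}(\sigma q\,\sigma r)=b_\link(qr)$, which is immediate since $B\mapsto\sigma B$ is a bijection $\link\to\sigma\link$ preserving containment of the pair. The second is the key point and the only place where $\sigma\in\gp$ rather than an arbitrary permutation is used: $d(\sigma q\,\sigma r)=d(qr)$. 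This holds because $\gp$ fixes $1$ and permutes the five pairs $\{2,3\},\dots,\{10,11\}$ among themselves, so $\{q,r\}$ is a matching pair exactly when $\{\sigma q,\sigma r\}$ is. We also need $\sigma\link$ to be a legitimate input, with the normalised degree sequence $b(1)=10$, $b(q)=9$. This follows because $\sigma$ fixes $1$.

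If the cardinality constraints are taken in their clausal encodings, the same statement holds at the CNF level. Here we relabel the auxiliary counter/totalizer variables alongside, since the encoder is applied to the permuted input list with an identical bound; we would note that isomorphism is asserted at the level of the abstract constraints and then inherited by the encoding up to renaming. Finally, isomorphic instances are equisatisfiable. So unsatisfiability of $E(\link)$ gives unsatisfiability of $E(\sigma\link)$ for every $\sigma\in\gp$, and by Lemma~\ref{lem:exttest} one certificate covers the whole orbit. The main obstacle, mild as it is, is the invariance of $d$, together with the care needed about whether "isomorphic" refers to the constraint system or to the literal CNF files.
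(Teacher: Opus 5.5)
Your proposal is correct and is essentially the paper's proof written out in full: the relabelling $x_C\mapsto x_{\sigma C}$ carries the uncovered quadruples of $\link$ to those of $\sigma\link$. Since $\gp$ preserves the five matched pairs (so $d(\sigma q\,\sigma r)=d(qr)$) and $b_{\sigma\link}(\sigma q\,\sigma r)=b_\link(qr)$, each residual pair-degree equation of $E(\link)$ goes to the corresponding equation of $E(\sigma\link)$, and your caveat that the isomorphism holds at the level of the abstract constraint system (an order-dependent clausal encoding matches only up to re-encoding) is a reasonable precision the paper leaves implicit.
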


\begin{proof}
$\sigma$ induces a bijection of $6$-subsets carrying the uncovered quadruples of $\link$ to those
of $\sigma\link$. Because $\gp$ preserves the five matched pairs, it also carries each residual
pair-degree equation of $E(\link)$ to the corresponding equation of $E(\sigma\link)$.
\end{proof}

\subsection{The blocker}\label{ssec:blocker}

The \emph{blocker} is a library of $15{,}120$ optimal $3$-$(11,5,1)$ coverings, each certified not
to extend, contributing $15{,}120$ clauses of the form \eqref{eq:blockerclause} to Layer A. It is
closed under $\gp$, being a union of exactly $20$ complete $\gp$-orbits of sizes
\[
  1\times 16,\quad 2\times 160,\quad 2\times 240,\quad 3\times 320,\quad 1\times 384,\quad
  3\times 480,\quad 4\times 960,\quad 4\times 1920
\]
(multiplicity $\times$ orbit size), summing to $15{,}120$. Closure was verified against the
three-element generating set of $\gp$ recorded in the archived deposit, and the set was verified
to be a union of complete orbits rather than of partial ones. By Lemma~\ref{lem:transfer}, $20$ extension certificates --- one per orbit --- cover
all $15{,}120$ members, and all $20$ are archived and verified.

The blocker was built incrementally. Nine orbits were certified first; under the resulting
$9$-orbit blocker, $43$ of the $47$ case instances
of Section~\ref{sec:cases} were already unsatisfiable. Each of the remaining four was satisfiable,
and each satisfying assignment decoded to an optimal covering not yet in the library; its
non-extendability was certified via $E(\cdot)$, its full $\gp$-orbit adjoined, and the case
re-solved. Three of the four cases closed after one such round (a $13$-orbit blocker); the last,
the case instance \texttt{s-r0-2}, required seven further rounds, ending at the final $20$-orbit
blocker. Consequently $43$ case instances carry $9$ blocker orbits, three carry $13$, and one
carries all $20$.

We emphasise the direction of the logic: the blocker is used only \emph{soundly}. Each blocker
clause is individually backed by an extension certificate, and no completeness property of the
blocker is ever assumed --- the exhaustiveness of the case analysis comes from the root-index split
and the degree constraints, never from the blocker. The correctness of each clause thus rests on
the $E(\cdot)$ encodings, which are covered by the independent byte-level encoding audit of
Section~\ref{sec:verification}.

\paragraph{Hash binding.} Every instance, certificate, and log is bound to the claims of this paper
by its SHA-256 digest, recorded in the manifest of the archived deposit
(Section~\ref{sec:verification}).

\section{The case analysis}\label{sec:cases}

Throughout, ``closed'' means that the corresponding instance was found unsatisfiable and its
certificate independently checked as described in Section~\ref{sec:verification}. All case
instances contain the coverage and degree clauses of Section~\ref{ssec:linkcnf} and, in addition,
a subset of the blocker clauses; the branching structure is justified once and for all by the following lemma.

\begin{lemma}[orbit branching]\label{lem:branch}
Let $H \le S_{11}$, and let $\mathcal{C}$ be a set of constraints on candidate links that is
$H$-invariant ($\link$ satisfies $\mathcal{C}$ iff $h\link$ does, for all $h \in H$). Let
$Q_1,\dots,Q_m$ be $H$-orbits of blocks with representatives $R_1,\dots,R_m$. Suppose that
\begin{itemize}
\item[(i)] for each $i$, no link satisfying $\mathcal{C}$ contains $R_i$ and avoids
  $Q_1 \cup \dots \cup Q_{i-1}$; and
\item[(ii)] no link satisfying $\mathcal{C}$ avoids $Q_1 \cup \dots \cup Q_m$.
\end{itemize}
Then no link satisfies $\mathcal{C}$.
\end{lemma}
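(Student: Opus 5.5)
The plan is a proof by contradiction. Suppose some link $\link$ satisfies $\mathcal{C}$. The idea is to locate the first orbit among $Q_1,\dots,Q_m$ that $\link$ meets, move $\link$ by an element of $H$ so that it contains that orbit's representative, and then contradict hypothesis (i).

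First, by hypothesis (ii), $\link$ cannot avoid $Q_1\cup\dots\cup Q_m$. So it contains a block from at least one $Q_i$. Let $i$ be the least such index. Then $\link$ avoids $Q_1\cup\dots\cup Q_{i-1}$ and contains some block $B\in Q_i$.

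Second, since $Q_i$ is an $H$-orbit containing $R_i$, there is $h\in H$ with $hB=R_i$. Set $\link'=h\link$.
\begin{itemize}
\item By $H$-invariance of $\mathcal{C}$, $\link'$ satisfies $\mathcal{C}$.
\item $\link'$ contains $hB=R_i$.
\item For $j<i$, each $Q_j$ is $H$-invariant, so $h^{-1}Q_j=Q_j$. If $\link'$ met $Q_j$, say $hC\in Q_j$ with $C\in\link$, then $C\in h^{-1}Q_j=Q_j$. That contradicts the minimality of $i$. Hence $\link'$ avoids $Q_1\cup\dots\cup Q_{i-1}$.
\end{itemize}
This contradicts hypothesis (i) for the index $i$. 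Therefore no link satisfies $\mathcal{C}$.

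There is no real obstacle here. The only points needing care are that $h$ acts on links blockwise, via $h\link=\{hC:C\in\link\}$, and that orbits are setwise invariant under $H$. Both are immediate from the definitions.
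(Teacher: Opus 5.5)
Your proof is correct and follows essentially the same approach as the paper: take the least $i$ for which $\link$ meets $Q_i$ (which exists by (ii)), translate $\link$ by some $h\in H$ sending the met block to $R_i$, and use the $H$-invariance of $\mathcal{C}$ and of each orbit $Q_j$ to contradict (i). Your explicit check via $h^{-1}Q_j=Q_j$ just spells out the step the paper states in one line.
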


\begin{proof}
Suppose $\link$ satisfies $\mathcal{C}$. By (ii), $\link$ meets some $Q_i$; choose $i$ least, and
$b \in \link \cap Q_i$. By transitivity of $H$ on $Q_i$ there is $h \in H$ with $h(b) = R_i$. Then
$h\link$ satisfies $\mathcal{C}$ (invariance), contains $R_i$, and avoids
$Q_1 \cup \dots \cup Q_{i-1}$: indeed $\link$ avoids these orbits by minimality of $i$, and each
$Q_j$ is $H$-invariant. This contradicts (i).
\end{proof}

Two remarks on how Lemma~\ref{lem:branch} is applied. First, a hypothesis of type (i) may itself
be established by a nested application, with $H$ replaced by the stabiliser of $R_i$ and
$\mathcal{C}$ extended by the (stabiliser-invariant) conditions ``$R_i \in \link$, no earlier orbit
met''. Second, an instance certifying (i) or (ii) may contain only a subset of the clauses of
$\mathcal{C}$ (in practice, a sub-blocker): unsatisfiability with fewer constraints implies
unsatisfiability with all of them. Below, $\mathcal{C}$ is always
\emph{coverage $\wedge$ degrees $\wedge$ blocker}, which is $\gp$-invariant: coverage and the
blocker by construction, the degrees because $\gp$ fixes the point $1$.

At the root, Lemma~\ref{lem:branch} is applied with $H = \gp$ and the orbits
$\mathcal{O}_0, \mathcal{O}_1$: case (i) for $\mathcal{O}_0$ is the region $r=0$, case (i) for
$\mathcal{O}_1$ is the region $r=1$, and case (ii) is the region $r \ge 2$.

\subsection{The region \texorpdfstring{$r \ge 2$}{r >= 2}}\label{ssec:rge2}

A single instance: coverage, degrees, the full $20$-orbit blocker, and $200$ negative units
forbidding every block of $\mathcal{O}_0 \cup \mathcal{O}_1$. It is unsatisfiable, and no
canonicity assumption is involved, so the conclusion applies to every candidate link avoiding
$\mathcal{O}_0$ and $\mathcal{O}_1$: hypothesis (ii) of the root application.

\begin{remark}\label{rem:repair}
The blocker clauses are essential here: the same instance \emph{without} them is satisfiable, and
its model decodes to an explicit optimal covering $E$ with root index $2$, blocks drawn from
$\mathcal{O}_2$ and $\mathcal{O}_3$, and degree sequence $(10,9^{10})$ --- as forced by
Lemma~\ref{lem:degrees}. $E$ is the witness of Proposition~\ref{prop:c1153}; it lies in the
blocker orbit of size $16$ --- the smallest of the twenty, with stabiliser of order $240$ in $\gp$
--- and its non-extendability is certified by that orbit's extension certificate.
\end{remark}

\subsection{The region \texorpdfstring{$r = 0$}{r = 0}}\label{ssec:r0}

Fix the canonical representative $\{1,2,4,6,8\}$ of $\mathcal{O}_0$; its stabiliser in $\gp$ has
order $48$, consistently with $3840/48 = 80 = |\mathcal{O}_0|$. The nested application of
Lemma~\ref{lem:branch} uses this stabiliser acting on the \emph{legal pool} --- the blocks still
available once $\{1,2,4,6,8\}$ is in the link --- which decomposes into $39$ secondary orbits.
Of these:
\begin{itemize}
  \item $6$ are closed by individual instances (hypotheses of type (i));
  \item $1$, with representative $\{1,2,3,4,5\}$, is expanded into a third level of case analysis,
        treated in Section~\ref{ssec:r0s0};
  \item the remaining $32$ are covered by a single tail instance (hypothesis (ii)): coverage,
        degrees, the root representative fixed, and negative units forbidding the seven named
        orbits entirely. It is unsatisfiable with no blocker clauses at all.
\end{itemize}
Since $6+1+32 = 39$, every secondary orbit is accounted for.

\subsection{The third level under \texorpdfstring{$r=0$}{r=0}}\label{ssec:r0s0}

With $\{1,2,4,6,8\}$ and $\{1,2,3,4,5\}$ both fixed, the joint stabiliser has order $8$, and the
legal pool decomposes into $122$ tertiary orbits: $33$ closed by individual instances and the
remaining $89$ by a single blocker-free tail instance; $33+89 = 122$.

\subsection{The region \texorpdfstring{$r = 1$}{r = 1}}\label{ssec:r1}

Fix the canonical representative $\{1,2,3,4,6\}$ of $\mathcal{O}_1$, whose stabiliser has order
$32$; again $3840/32 = 120 = |\mathcal{O}_1|$. Here the case hypothesis excludes all of
$\mathcal{O}_0$, and on the resulting legal pool the stabiliser has exactly $60$ secondary orbits:
$12$ closed by individual instances and the remaining $48$ by a single blocker-free tail instance;
$12+48 = 60$.

\begin{remark}\label{rem:overcount}
The archived deposit closes this region with sixteen individual instances rather than twelve: they
were enumerated over the full set of remaining blocks rather than over the legal pool, so they
include configurations that reintroduce a block of $\mathcal{O}_0$ and are already excluded by the
case hypothesis. The sixteen collapse onto the $12$ distinct legal orbits, the other four being
duplicates with byte-identical trimmed proofs. The deposit therefore decides four redundant
branches beyond the minimal case space --- redundant, never unsound.
\end{remark}

\subsection{Completeness checks}\label{ssec:completeness}

Two properties were checked by programs written independently of the code that generated the
instances. \emph{Region completeness}: in each region, the orbits whose blocks a tail instance
forbids wholly (the ``named'' orbits) all have certified closures --- $7/7$
for $r=0$, $33/33$ for the third level, $12/12$ for $r=1$ --- with no orbit missing. Each of these
closures is a single solved instance, with the one exception of the orbit of $\{1,2,3,4,5\}$ under
$r=0$, which is closed by the third-level analysis of Section~\ref{ssec:r0s0}.
\emph{Tail soundness}: no tail instance forbids part of an orbit while leaving the rest
unaccounted for; the number of straddling orbits is $0$ in all three regions. Together these
confirm that the hypotheses of Lemma~\ref{lem:branch} are certified at every level.

\subsection{Proof of the main theorem}

\begin{proof}[Proof of Theorem~\ref{thm:main}]
The archived $41$-block design covers all $\binom{12}{4} = 495$ quadruples
(Section~\ref{sec:verification}), so $\Cov{12}{6}{4} \le 41$.

Suppose a $4$-$(12,6,1)$ covering with $40$ blocks exists. Normalise its perfect matching as in
Section~\ref{sec:structure}, and let $\link$ be the link of the root point $p_0$. By
Lemma~\ref{lem:forcing}, $\link$ satisfies the coverage and degree constraints. The case analysis
of Sections~\ref{ssec:rge2}--\ref{ssec:completeness} establishes, via Lemma~\ref{lem:branch}
applied with $\mathcal{C} = \text{coverage} \wedge \text{degrees} \wedge \text{blocker}$, that no
candidate link satisfies $\mathcal{C}$. Hence $\link$ violates some blocker clause
\eqref{eq:blockerclause}; since $|\link| = 20$, this means $\link$ \emph{equals} the blocked
covering of that clause. But every blocker member lies in one of the $20$ certified
$\gp$-orbits, so by Lemma~\ref{lem:transfer} the instance $E(\link)$ is unsatisfiable, and by
Lemma~\ref{lem:exttest} no matching-normalised $40$-block covering has $\link$ as its root link
--- contradicting the fact that $\link$ is the root link of the covering just normalised. So no
$40$-block covering exists, and $\Cov{12}{6}{4} = 41$.
\end{proof}

\subsection{Uniqueness of the optimal \texorpdfstring{$3$-$(11,5,1)$}{3-(11,5,1)} covering}

The same certificates decide a classical uniqueness question. The blocked set is closed under
$\gp$ but not under $S_{11}$; a short breadth-first orbit computation, included in the deposit,
shows that the $S_{11}$-orbit of the witness $E$ has size
$166{,}320 = 11!/240$ and contains all $15{,}120$ blocked coverings. The two numbers corroborate
each other: $166{,}320/11 = 15{,}120$ is exactly the number of orbit members whose degree-$10$
point is the point $1$.

\begin{proposition}\label{prop:unique}
Up to permutations of the point set there is exactly one $3$-$(11,5,1)$ covering with $20$ blocks.
Its automorphism group has order $240$, so there are $166{,}320$ labelled optimal coverings.
\end{proposition}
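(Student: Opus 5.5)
The plan is to reuse the Layer A computation without the blocker's role as an obstruction, reading it instead as a \emph{classification}. Let $\link$ be any $3$-$(11,5,1)$ covering with $20$ blocks. By Lemma~\ref{lem:degrees} it has a unique point of degree $10$; relabel so that this point is $1$. Then $\link$ satisfies the coverage and degree constraints of Section~\ref{ssec:linkcnf}. These constraints do not mention the perfect matching at all: they are invariant under the whole stabiliser of the point $1$, and in particular under $\gp$. The extension instances are therefore irrelevant here. The case analysis of Section~\ref{sec:cases} shows, via Lemma~\ref{lem:branch} with $\mathcal{C}=$ coverage $\wedge$ degrees $\wedge$ blocker, that no candidate violating no blocker clause exists. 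Hence $\link$ violates some blocker clause, and since $|\link|=20$ it equals one of the $15{,}120$ blocked coverings. Here it is essential that the branching and completeness checks used only the $\gp$-invariance of $\mathcal{C}$ and never the extension certificates.

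The second step reduces the blocked list to one isomorphism class. I would invoke the breadth-first orbit computation: the $S_{11}$-orbit of $E$ contains all $15{,}120$ blocked coverings. It is cleaner to phrase this as a certified finite check. For each of the $20$ $\gp$-orbit representatives, exhibit an explicit permutation $\pi\in S_{11}$ carrying $E$ onto it, and verify the $20$ images block by block. Then every optimal covering is isomorphic to $E$, which gives uniqueness.

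For the automorphism count, I would compute $\mathrm{Aut}(E)$ directly. Every automorphism fixes the unique degree-$10$ point $1$, so the search runs over $S_{10}$ acting on $\{2,\dots,11\}$. I would enumerate it by a small backtrack, or check $240$ explicit generated elements and bound the group from above by orbit counting. The cleanest cross-check uses the two numbers already obtained. The labelled coverings with degree-$10$ point equal to $1$ are exactly the blocked ones, since the classification shows every such covering is blocked and every blocked covering is of this kind. There are therefore $15{,}120$ of them, and by symmetry over the choice of degree-$10$ point there are $11\cdot 15{,}120=166{,}320$ labelled optimal coverings in all. Orbit–stabiliser then gives $|\mathrm{Aut}(E)|=11!/166{,}320=240$, consistent with the stabiliser of order $240$ in $\gp$ noted in Remark~\ref{rem:repair}. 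That consistency shows the full automorphism group happens to lie inside $\gp$.

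The main obstacle is the first step's logical subtlety. The case analysis must be certified \emph{relative only to coverage, degrees and blocker}, and no hidden use may be made of the matching normalisation beyond $\gp$-invariance. I would re-audit that the root-index split, the tail instances and the named-orbit checks are all stated for arbitrary candidate links with $b(1)=10$, which is how Section~\ref{sec:cases} phrases them. Once this holds, uniqueness follows from the finite isomorphism check.
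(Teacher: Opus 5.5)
Your proposal is correct and follows the paper's proof. The Layer A case analysis uses the blocker clauses only as constraints, never the extension certificates, and it forces any optimal covering (relabelled so its degree-$10$ point is $1$) to be one of the $15{,}120$ blocked coverings; a finite isomorphism check then places all of these in the $S_{11}$-orbit of $E$. The one minor variation is that you obtain the orbit size $166{,}320$ as $11\cdot 15{,}120$ from the exactness of the classification, whereas the paper reads it off the breadth-first orbit computation and cites $166{,}320/11=15{,}120$ only as corroboration; in both cases orbit--stabiliser gives $|\operatorname{Aut}(E)|=240$.
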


\begin{proof}
Existence is Proposition~\ref{prop:c1153}. Let $M$ be any optimal covering. By
Lemma~\ref{lem:degrees} it has a unique degree-$10$ point; relabel so that this point is the
point $1$. Then $M$ satisfies the coverage and degree constraints, and by the case analysis (as in
the proof of Theorem~\ref{thm:main}) it violates a blocker clause, i.e., $M$ is one of the
$15{,}120$ blocked coverings. All of these lie in a single $S_{11}$-orbit --- the orbit of $E$,
by the computation above --- so $M \cong E$. The orbit size $166{,}320 = 11!/240$ gives
$|{\operatorname{Aut}}(E)| = 240$ by orbit--stabiliser.
\end{proof}

The proof above establishes Proposition~\ref{prop:unique} from the certified case analysis alone.
This uniqueness result is not new: van Rees~\cite{vanRees1994} proves both $\Cov{11}{5}{3}=20$
and the uniqueness of the $20$-block covering up to isomorphism.  The contribution here
is a reproducible, certificate-backed reproof and an independently checkable automorphism-order
calculation.
Note that Proposition~\ref{prop:unique} uses only Layer A: the blocker clauses enter as
\emph{constraints}, and the argument never needs the fact that their members fail to extend.

\section{Verification}\label{sec:verification}

This section records how each computational claim of the paper is checked --- the upper-bound
design directly, every unsatisfiability claim through a certificate pipeline ending in a formally
verified checker, and the encodings themselves through an independent byte-level audit --- and
closes with a precise statement of the residual trust assumptions.

\paragraph{Upper bound.} The $41$-block design was checked by two independently written programs,
one in Python using bitmask representations and one in C using nested-loop membership tests,
applied to two separately obtained copies of the design (the stored copy and a fresh retrieval
from the repository, byte-identical after normalisation). Both programs confirm $41$ distinct
blocks of six distinct points each and all $495$ quadruples covered; both reject a negative
control obtained by corrupting a single element.

\paragraph{Solver and proof checking.} All instances were solved with
CaDiCaL~3.0.1~\cite{CaDiCaL}. The solver is not trusted: each unsatisfiability claim is emitted as
a DRAT certificate, checked by \texttt{drat-trim}~\cite{DRATtrim} (required to report
\texttt{s VERIFIED}), converted to LRAT, and re-checked by \texttt{cake\_lpr}~\cite{cakelpr}, a
checker whose correctness is itself a machine-checked theorem in the CakeML and HOL4
ecosystem. The reproduction pipeline, per instance, is:
\begin{center}\small
\texttt{cadical F.cnf F.drat} $\to$ \texttt{drat-trim F.cnf F.drat -L F.lrat} $\to$
\texttt{cake\_lpr F.cnf F.lrat}
\end{center}
Exact tool versions, sources and build instructions, together with exact pins of every Python
dependency of the instance generators, are recorded in the archived deposit.

\paragraph{Certificate inventory.} The primary proof archive contains $81$ certificates. Two
additional case instances under the second cardinality encoding bring the checked inventory to
$83$. Every certificate was accepted by \texttt{drat-trim} and \texttt{cake\_lpr}, with no
failures, hash mismatches or empty proofs:

\begin{center}
\begin{tabular}{lrl}
\toprule
family & count & establishes \\
\midrule
case instances (\S\ref{sec:cases}) & $47$ & the $6+8+33$ individual case closures \\
auxiliary case instances & $14$ & the $r\ge 2$ and $r\ge 3$ instances, the three tails, \\
 & & the eight additional $r{=}1$ instances, and Lemma~\ref{lem:c1042} \\
extension instances (\S\ref{ssec:extension}) & $20$ & one per blocker orbit \\
\midrule
primary archive & $81$ & \\
second-encoding additions & $2$ & two case instances re-proved \\
\midrule
checked inventory & $83$ & \\
\bottomrule
\end{tabular}
\end{center}

The $r\ge 3$ label refers to the blocker-free certificate described in
Section~\ref{sec:symmetry}. The split between the case and auxiliary families for $r=1$ is
archival: eight of the sixteen jobs are counted in each family; together they represent twelve
legal-orbit closures and four duplicate certificates, as explained in
Remark~\ref{rem:overcount}.

Of the $47$ primary case certificates, $46$ check sequential-counter instances. The largest case,
\texttt{s-r0-2}, checks the totalizer instance; its sequential counterpart was also solved and
accepted by \texttt{drat-trim}, but that large proof was not retained. The manifest records both
CNF hashes and their identical non-cardinality core.

\paragraph{Cross-encoding replication.} All $47$ case instances were regenerated under the second
cardinality encoding and re-solved from scratch: the regenerated instances agree byte-for-byte
with the originals on their non-cardinality core in all $47$ cases while differing genuinely in
cardinality structure, and all $47$ are unsatisfiable under the second encoding with
\texttt{drat-trim}-verified proofs. Proofs were retained for $46$ cases; for
\texttt{s-r0-2}, the sequential cross-encoding proof was checked and then discarded, while its
certified totalizer proof remains in the primary archive. Two additional instances carry
\texttt{cake\_lpr} certificates, counted above. There were no satisfiable results and no timeouts.
We describe this as a cross-encoding check: both translations encode the same non-cardinality core
and use structurally different cardinality clauses.

\paragraph{Compute.} The second-encoding sweep required $12{,}347$~s of solving and $10{,}310$~s
of proof checking over $16.3$~GiB of DRAT, on an eight-core machine with $8$~GB of memory. The
largest single instance produced $3.08$~GiB of certificate ($2826$~s to solve, $1710$~s to check).

\paragraph{Encoding audit.} Three auditors, written from first principles in pure
standard-library Python with no dependency on the encoding library or on any code that produced
the instances, reconstruct the complete $81$-formula sequential baseline --- the $47$ case
formulas, the $20$ extension formulas, and the $14$ auxiliary formulas --- directly from their
mathematical descriptions: coverage clauses from the combinatorics, cardinality
segments from a clean-room implementation of the pruned sequential-counter construction of
Sinz~\cite{Sinz2005}, blocker prefixes re-parsed from their pinned files, and case tails checked
to consist of clauses over primary variables only. In all $81$ cases the reconstruction
reproduces the shipped baseline instance \emph{byte-for-byte}, with SHA-256 equal to the manifest
pins. Thus every primary formula except the totalizer version of \texttt{s-r0-2} is itself
byte-audited; for \texttt{s-r0-2}, the audited sequential formula has the same non-cardinality
core as the certified totalizer formula, and both cardinality translations pass the exhaustive
small-instance validation described in Section~\ref{ssec:linkcnf}.

\paragraph{Verification scope.} The hand argument reduces the theorem to the certified finite
instances above. Separate programs recompute the group, orbit partitions, blocker closure,
branch completeness, pair-degree extension formulas and witness identities. Every
unsatisfiability claim used in the primary proof has a deposited certificate checked by both
\texttt{drat-trim} and the formally verified checker \texttt{cake\_lpr}; the second-encoding
sweep provides additional corroboration, with one checked cross-encoding proof not retained while
the corresponding certified primary proof remains deposited. The remaining trusted surface is the short translation and audit code,
the documented cardinality constructions, the CakeML/HOL4 checker stack and the computing
platform. This is the standard verification model for large SAT-based combinatorial
proofs~\cite{PythagoreanTriples,SchurFive,KellerConjecture}.

\section{Concluding remarks}\label{sec:concluding}

\subsection{The Tur\'an reformulation}

For $n \ge \ell \ge k$, the \emph{Tur\'an number} $T(n,\ell,k)$ is the least number of
$k$-subsets of an $n$-set such that every $\ell$-subset contains at least one of them; it is the
complement-dual of the covering number, $T(n,\ell,k) = \Cov{n}{n-k}{n-\ell}$
\cite[Cor.~1.9]{HandbookCoverings}. Writing $K^{(k)}_\ell$ for the complete $k$-uniform
hypergraph on $\ell$ vertices and $\ex(n; K^{(k)}_\ell)$ for the maximum number of edges of a
$k$-uniform hypergraph on $n$ vertices containing no copy of $K^{(k)}_\ell$, one has
$\ex(n; K^{(k)}_\ell) = \binom{n}{k} - T(n,\ell,k)$.

\begin{corollary}\label{cor:turan}
$T(12,8,6) = 41$; equivalently, $\ex(12; K^{(6)}_8) = 924 - 41 = 883$.
\end{corollary}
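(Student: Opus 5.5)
The plan is to derive the corollary directly from Theorem~\ref{thm:main} via the complement duality between Turán systems and coverings, and then to translate to the extremal number by complementation within $\binom{\pts}{6}$. Set $n=12$, $\ell=8$, $k=6$. A family $\mathcal{F}$ of $6$-subsets of a $12$-set $\pts$ has every $8$-subset containing a member of $\mathcal{F}$ if and only if the family of complements $\{\pts\setminus F : F\in\mathcal{F}\}$, which consists of $6$-subsets, has every $4$-subset contained in a member. Indeed, $F\subseteq Y$ with $|Y|=8$ is equivalent to $\pts\setminus Y\subseteq \pts\setminus F$, and $\pts\setminus Y$ ranges over all $4$-subsets as $Y$ ranges over all $8$-subsets.

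The first step is to check this equivalence, which is the cited identity $T(n,\ell,k)=\Cov{n}{n-k}{n-\ell}$ specialised to our parameters. Complementation is a bijection that preserves cardinalities of families, so minimum sizes agree and $T(12,8,6)=\Cov{12}{6}{4}$. Applying Theorem~\ref{thm:main} then gives $T(12,8,6)=41$.

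The second step is the extremal number. A $6$-uniform hypergraph $\mathcal{H}$ on $\pts$ contains no $K^{(6)}_8$ exactly when no $8$-set has all of its $6$-subsets in $\mathcal{H}$. Equivalently, every $8$-set contains a $6$-subset lying in the complementary family $\binom{\pts}{6}\setminus\mathcal{H}$, so that complementary family is a Turán system. Maximising $|\mathcal{H}|$ is therefore the same as minimising the size of the Turán system, which gives $\ex(12;K^{(6)}_8)=\binom{12}{6}-T(12,8,6)=924-41=883$.

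None of these steps is a real obstacle, since everything reduces to Theorem~\ref{thm:main}. The only point that needs care is the direction of the complementations: we must check that $n-k=6$ and $n-\ell=4$, so that the dual problem is exactly the $4$-$(12,6,1)$ covering problem.
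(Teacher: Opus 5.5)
Your proposal is correct and follows the paper's route: the corollary comes directly from Theorem~\ref{thm:main} via the complement duality $T(n,\ell,k)=\Cov{n}{n-k}{n-\ell}$, which you verify by hand where the paper cites it, together with $\ex(n;K^{(k)}_\ell)=\binom{n}{k}-T(n,\ell,k)$. Your parameter check ($n-k=6$, $n-\ell=4$) and the arithmetic $\binom{12}{6}-41=924-41=883$ are right.
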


We stress that Corollary~\ref{cor:turan} is an exact finite value and carries no asymptotic
content: the density $883/924$ it yields at $n=12$ is weaker than what follows from de Caen's
general bounds on Tur\'an numbers~\cite{deCaen1983} (see also the survey by
Keevash~\cite{Keevash2011}), so it gives no new information about the Tur\'an density of
$K^{(6)}_8$.

\subsection{Consequences for other covering numbers}

Applying \eqref{eq:schonheim} repeatedly, the value $\Cov{12}{6}{4}=41$ propagates along the
family of parameter triples with $v-k = 6$ and $v-t = 8$. Every previously tabulated lower bound
on this family is reproduced exactly by \eqref{eq:schonheim} seeded with the old value $40$
($\lceil 13\cdot 40/7\rceil = 75$, $\lceil 14\cdot 75/8\rceil = 132$,
$\lceil 15\cdot 132/9\rceil = 220$, $\lceil 16\cdot 220/10\rceil = 352$), so no stronger bound was
in force and each entry improves.

\begin{corollary}\label{cor:propagation}
$\Cov{13}{7}{5} \ge 77$, $\Cov{14}{8}{6} \ge 135$, $\Cov{15}{9}{7} \ge 225$ and
$\Cov{16}{10}{8} \ge 360$.
\end{corollary}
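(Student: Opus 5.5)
The plan is to feed the new value $\Cov{12}{6}{4}=41$ into the Sch\"onheim recursion \eqref{eq:schonheim}, step by step along the chain $(12,6,4)\to(13,7,5)\to(14,8,6)\to(15,9,7)\to(16,10,8)$. At each step, the lower bound just obtained replaces the old tabulated value on the right-hand side. Each triple $(v,k,t)$ in the chain is obtained from the previous one $(v-1,k-1,t-1)$ by adding $1$ to all three parameters, so \eqref{eq:schonheim} applies directly. The inequality is monotone in $\Cov{v-1}{k-1}{t-1}$, so we may substitute any lower bound for that quantity rather than its exact value. This is exactly Lemma~\ref{lem:link} summed over points, and it needs no further structural input.

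The computation proceeds in four steps:
\begin{align*}
\Cov{13}{7}{5} &\ge \lceil 13\cdot 41/7\rceil = \lceil 533/7\rceil = \lceil 76.14\ldots\rceil = 77,\\
\Cov{14}{8}{6} &\ge \lceil 14\cdot 77/8\rceil = \lceil 1078/8\rceil = \lceil 134.75\rceil = 135,\\
\Cov{15}{9}{7} &\ge \lceil 15\cdot 135/9\rceil = 225,\\
\Cov{16}{10}{8} &\ge \lceil 16\cdot 225/10\rceil = 360.
\end{align*}
Each step uses Theorem~\ref{thm:main} only at the seed. The later steps use the bound produced by the preceding line, which is legitimate because the bound in \eqref{eq:schonheim} is nondecreasing in its argument.

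There is no real obstacle here; the only thing to be careful about is the arithmetic of the ceilings. In the first two steps the ceiling genuinely rounds up, and this rounding is what makes the improvement compound (from $75$ to $77$ and from $132$ to $135$). In the last two steps the divisions are exact. For the claim that these are improvements, I would point to the preceding paragraph: the previously tabulated values coincide with the iteration seeded at $40$.
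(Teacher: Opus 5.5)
Your proposal is correct and matches the paper's proof. Both iterate \eqref{eq:schonheim} four times from the seed $\Cov{12}{6}{4}=41$, substituting each new lower bound into the next step, and your ceilings ($77$, $135$, $225$, $360$) agree with the paper's.
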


\begin{proof}
By \eqref{eq:schonheim} and Theorem~\ref{thm:main}:
$\lceil \tfrac{13}{7}\cdot 41\rceil = 77$, then $\lceil \tfrac{14}{8}\cdot 77\rceil = 135$,
$\lceil \tfrac{15}{9}\cdot 135\rceil = 225$, $\lceil \tfrac{16}{10}\cdot 225\rceil = 360$.
\end{proof}

\begin{table}[ht]
\centering
\begin{tabular}{lccccc}
\toprule
 & previous & new & gain & best known upper & gap \\
\midrule
$\Cov{12}{6}{4}$  & $40$  & $\mathbf{41}$  & $+1$ & $41$  & $1 \to 0$ \\
$\Cov{13}{7}{5}$  & $75$  & $\mathbf{77}$  & $+2$ & $78$  & $3 \to 1$ \\
$\Cov{14}{8}{6}$  & $132$ & $\mathbf{135}$ & $+3$ & $151$ & $19 \to 16$ \\
$\Cov{15}{9}{7}$  & $220$ & $\mathbf{225}$ & $+5$ & $270$ & $50 \to 45$ \\
$\Cov{16}{10}{8}$ & $352$ & $\mathbf{360}$ & $+8$ & $448$ & $96 \to 88$ \\
\bottomrule
\end{tabular}
\caption{Lower bounds improved by Theorem~\ref{thm:main}. Previous values and upper bounds are
those recorded in \cite{LJCR,CoveringRepository}, as pinned in the repository snapshot archived
with the deposit. The family continues beyond the tabulated range; for instance
$\Cov{17}{11}{9} \ge 557$.}
\label{tab:prop}
\end{table}

In the notation of Corollary~\ref{cor:turan} the family is a single sequence of Tur\'an
numbers, $\Cov{v}{v-6}{v-8} = T(v,8,6)$ for $v \ge 12$: Theorem~\ref{thm:main} determines its
initial term exactly and raises the best known lower bound for every later term, e.g.\ $T(13,8,6)\ge77$ and
$T(14,8,6)\ge135$.

\subsection{Outlook}

The feature that made these parameters tractable is the rigidity of Lemma~\ref{lem:forcing}: the
bound \eqref{eq:known} is attained with zero slack, forcing the link of every point to be an
optimal covering with forced degrees and collapsing the search space accordingly. This occurs
whenever $v\,\Cov{v-1}{k-1}{t-1}$ is divisible by $k$ and the resulting bound is one short of the
true value --- a configuration in which the divisibility hypothesis of the Mills--Mullin
increment fails. The
natural next target, $\Cov{13}{7}{5}$, whose gap Corollary~\ref{cor:propagation} reduces to one,
is \emph{not} of this kind: $13 \cdot 41 = 533$ while $7 \cdot 77 = 539$, so a hypothetical
$77$-block covering has six units of degree slack, and the link of a point need not be optimal.
Deciding it would need either the present method scaled to a weaker forcing regime or a new idea;
parameter families where the lifted bound is again tight are the better candidates for a direct
replay of this approach.

\subsection*{Use of AI assistance}

During the preparation of this work the author used the large language models GPT-5.6~Sol,
GPT-5.6~Terra and Claude Fable~5, in the versions available in July~2026, to assist with
exploratory analysis, computational search, supporting code, manuscript drafting and revision,
and proofreading. The verification status and residual trust assumptions of the reported
computations are stated in Section~\ref{sec:verification}. The author reviewed and edited all
model output and takes full responsibility for the content of this paper.

\subsection*{Data availability}

The source code, all exact CNF instances checked by the retained certificates, verification
programs, logs, the pinned repository snapshot, and the uniqueness-orbit computation supporting
this paper are archived in version~1.0.1 of
\url{https://doi.org/10.5281/zenodo.21572069}; that DOI represents all versions of the
artifact record and resolves to the latest one. The $81$ DRAT unsatisfiability certificates
refuting those instances are archived as a companion record at
\url{https://doi.org/10.5281/zenodo.21573716}. SHA-256 digests of
all files are recorded in the deposit manifests and serve as their canonical identifiers.

Two derived sets are not deposited. The $47$ primary-case LRAT files, about $13$~GiB, regenerate
deterministically from the deposited CNF and DRAT pairs via \texttt{drat-trim} \texttt{<cnf>}
\texttt{<drat>} \texttt{-L}; and the retained second-encoding DRAT proofs for $46$ of the $47$
frontier nodes, about $13$~GiB, regenerate from the archived instances via the pipeline of
Section~\ref{sec:verification}. Both are available from the author on request. Every object in
either set is identified by its SHA-256 digest and byte length in \texttt{data/frontier.json}, so
a re-derivation can be compared object by object without transferring them.

\end{document}